\theoremstyle{definition}
\newtheorem{thm}{Theorem}
\newtheorem{lem}{Lemma}
\newtheorem{prop}{Proposition}
\newcommand{\C}{\mathbbm{C}}   
\newcommand{\R}{\mathbbm{R}}                 
\newcommand{\Z}{\mathbbm{Z}}                 
\renewcommand{\L}{\mathcal{L}}
\newcommand{\1}{\mathbbm{1}}
\newcommand{\D}{\mathbb{D}}
\newcommand{\im}{\text{Im}}
\newcommand{\re}{\text{Re}}
\newcommand{\eps}{\epsilon}
\newcommand{\om}{\omega}
\newcommand{\bigo}[1]{\mathcal{O}\left(#1\right)}
\renewcommand{\S}{\mathcal{S}}
\newcommand{\bd}[1]{\partial{#1}}
\newcommand{\h}{\mathbbm{H}}
\newcommand{\Da}{D_{\alpha}}
\newcommand{\Dap}{D'_{\alpha}}
\newcommand{\ca}{c_{\alpha}}
\newcommand{\K}{\mathcal{K}}
\begin{document}


\title{Rates of Convergence for the Planar Discrete Green's Function in Pacman Domains}
\author{Christian Bene\v{s}
}
\date{}
\maketitle

\begin{abstract}
We obtain upper bounds for the rates of convergence for the simple random walk Green's function in the domains $D_\alpha = D_{\alpha}(n)=\{re^{i\theta}\in \C:0 <\theta<2\pi-\alpha, 0<r<2n\}-z_0,$
where $z_0\in\Z^2$ is a point closest to $ne^{i(\pi-\alpha/2)}$. The rate depends on the angle of the wedge and is what was suggested by the sharpest available results in the extreme cases $\alpha =0$ and $\alpha=\pi$. Our proof uses the KMT coupling between random walk and Brownian motion.
\end{abstract}

\section{Introduction and Statement of Main Result}

\subsection{Motivation}

The rate of convergence of the Green's function for different kinds of planar, discrete-time random walks to the continuous Green's function has been studied in a number of papers in general classes of domains (see for instance \cite{kl}, \cite{bjk}, and \cite{jiangkennedy}), as well as in some specific domains, in the case of the simple random walk Green's function, such as the disk (see \cite{greenbook}) and the half- and quarter-plane (see \cite{spitzer}). The currently available results suggest that in the case of smooth domains, the rate of convergence should be of the same order as that of the lattice spacing, whereas in arbitrary domains, it should be of order square root of the lattice spacing. The present paper suggests an answer to the question of how the rate of convergence depends on domain regularity by examining a family of domains with one singular boundary point.

The question of rate of convergence of discrete Green's functions is intimately related to the question of the rate of convergence of discrete harmonic measure (see \cite{SLEbook} for a discussion of the relation between the Green's function, harmonic measure, and the Poisson kernel, the Radon-Nikodym derivative of harmonic measure with respect to Lebesgue measure). It is shown in \cite{jiangkennedy} that the rate of convergence  in smooth domains of harmonic measure for the discrete-time, continuous-space random walk considered in that paper is the same as the rate of convergence of the corresponding Green's function, that is, roughly the inverse of the step size. However, no equivalent result is currently available for simple random walk. The work done in the present paper is intended to be a first step towards obtaining rates of convergence for simple random walk harmonic measure.
Note that discrete harmonic measure is known to converge in the domains we are examining in this paper (see, e.g., \cite{lawlerlimic}, \cite{chelkaksmirnov}, and \cite{biskuplouidor}, which all discuss classes of domains which contain the domains considered in the present paper).

The importance of harmonic measure itself is manifold, but particularly obvious in the context of the Dirichlet problem: The Dirichlet problem in a domain $D\subset\C$ consists of finding a function $f:\bar{D}\to\R$, harmonic in $D$, with prescribed boundary values $h$, that is, of finding a function $f:\bar{D}\to\R$ such that 
\begin{equation}\label{dirichlet}
\left\{
\begin{array}{ll}
  \Delta f(z)  
  =0, & z\in D\\
  f(z) = h(z), & z\in\partial D.
\end{array}
\right.
\end{equation}
Whether the Dirichlet problem actually has a solution depends both on $D$ and $h$. It is known (see \cite{garnett_marshall} for an analytic point of view and \cite{SLEbook} for a probabilistic approach) that if $D$ is a regular domain (this roughly means that all points in $\partial D$ are part of a piece of a curve $\subset \partial D$ containing more than one point; in particular all simply connected domains are regular) and $h$ is continuous and bounded, then there exists a unique bounded, continuous solution to the Dirichlet problem. In that case, one can write the solution of \eqref{dirichlet} as
$$f(z)=\int_{\partial D} h(w)\om(z,|dw|; D),$$
where $\om=\om(z, \cdot; D)$ is harmonic measure from $z$ on $\partial D$.


\subsection{Definitions and Important Properties}

For a domain $D\subsetneq \Z^2$, if $S$ is simple random walk started at $z$ and
\begin{equation}\label{T}
T_D=\min\{k\geq 0:S_k\not\in D\},
\end{equation} 
is the first time $S$ leaves $D$, the discrete Green's function in $D$ is, for $z,w\in\Z^2$,
$$G_D(z,w)=E^z\left[\sum_{k\geq 0}\1\{S_k=w; k <T_D\}\right],$$
the expected number of visits to $w$ before leaving $D$ by $S$ started at $z$. We will write $G_D(w)=G_D(0,w)$.

A representation of $G_D$ which will be particularly useful for us is the following (see \cite{greenbook}): For $z,w\in\Z^2$,
\begin{equation}\label{discretegreenrep}
G_D(z,w)=E^z[a(S(T_D)-w)]-a(z-w),
\end{equation}
where for $x\in\Z^2$, 
$$a(x)=\sum_{j\geq 0}(P^0(S(j)=0)-P^x(S(j)=0))$$
is the potential kernel for simple random walk. As $|x|\to\infty$, $a$ has the representation
\begin{equation}\label{ax}
a(x)=\frac{2}{\pi}\log|x|+k_0+\bigo{|x|^{-2}},
\end{equation}
where $k_0=\frac{2\gamma+3\ln 2}{\pi}$ and $\gamma$ is Euler's constant. See \cite{FukU1} for more details.

One can define a continuous analogue of the discrete Green's function.
If $D\subsetneq \C$ is a domain such that for any $z\in D$, if $B$ is standard Brownian motion started at $z$, 
\begin{equation}\label{tau}
\tau_D=\inf\{t\geq 0:B_t\not\in D\}
\end{equation}
satisfies $\tau_D<\infty, \text{a.s.}$, we can define
$$p_D(t,z,w)=\lim_{\eps\to 0}\frac{1}{\pi\eps^2}P^z(|B_t-w|\leq \eps, t\leq \tau_D),$$
the transition density for $B$ from $z$ to $w$ before exiting $D$. The Green's function in $D$ is then, for $z\neq w$, 
$$g_D(z,w)=\pi\int_0^{\infty} p_D(t,z,w)\,dt.$$
It is the unique harmonic function on $D\setminus \{z\}$ satisfying $\lim_{w\to w_0}g_D(z,w)=0$ for every regular (see \cite{SLEbook} for a precise definition) boundary point $w_0\in \bd D$ and
$$g_D(z,w) + \log|z-w| = \bigo{1}, \qquad \text{ as } |z-w|\to 0$$
(see below for the definition of $\bigo{\cdot}$). We will write $g_D(w)=g_D(0,w)$.

An analogue of \eqref{discretegreenrep} holds for $g_D$: For $z,w\in D$,
\begin{equation}\label{contgreenrep}
g_D(z,w)=E^z[\log |B(\tau_D)-w|]-\log |z-w|.
\end{equation}
Note that \eqref{contgreenrep} implies that $g_D$ is unchanged under re-parametrizations of $B$.

The fundamental property of conformal invariance of planar Brownian motion carries over to the Green's function: If $\psi:D\to D'$ is a conformal transformation, then 
$$g_D(z,w)=g_{\psi(D)}(\psi(z),\psi(w)).$$




Throughout this paper, when we write $f(z)=\bigo{g(z)}$, we mean that there exists a constant $c$ such that for all $z$ in a set which will depend on the context (usually, it will be for $z$ large enough or for $z$ small enough), $|f(z)|\leq c|g(z)|$. We will use this notation for real-valued functions only and make it explicit when we need it for complex-valued functions. We will also use the notation $f(z)\lesssim g(z)$ to mean the same thing and $f(z)\gtrsim g(z)$ to mean $g(z)=\bigo{f(z)}$. $f(z)=o(g(z))$ will mean $\lim_{|z|\to\infty}f(z)/g(z)=0$.

The proof of Theorem A.1 in \cite{bjk} suggests that the worst-case scenario in arbitrary domains arises when the boundary contains a slit and that the rate of convergence of the Green's function is fastest when the domain is smooth. 
It is therefore natural to consider the domains (see Figure \ref{Fig1})
\begin{equation}\label{da}
D_\alpha = D_{\alpha}(n)=\{re^{i\theta}\in \C:0 <\theta<2\pi-\alpha, 0<r<2n\}-z_0,
\end{equation}
where $0\leq \alpha\leq \pi$ and $z_0\in\Z^2$ is a point closest to $ne^{i(\pi-\alpha/2)}$. Note that these domains have inner radius within one unit of $n$.

\subsection{Rate of Convergence of Discrete Green's Function}

This paper's main result is an upper bound for the rate of convergence of $G_{D_{\alpha}}(w)$ for $w\in D_{\alpha}$:

\begin{thm}\label{green}
If $w\in D_{\alpha}$,
$$\left|G_{D_{\alpha}}(w)-\frac{2}{\pi}g_{D_{\alpha}}(w)\right|=\bigo{\left(\frac{\log^2 n}{n}\right)^{c_{\alpha}}+|w|^{-2}},$$
where 
\begin{equation}\label{ca}
c_{\alpha}=\frac{1}{2}+\frac{\alpha}{4\pi-2\alpha}=\frac{\pi}{2\pi-\alpha}.
\end{equation}
In particular, for $|w|\geq (n/\log^2 n)^{c_\alpha/2}$,
$$\left|G_{D_{\alpha}}(w)-\frac{2}{\pi}g_{D_{\alpha}}(w)\right|=\bigo{\left(\frac{\log^2 n}{n}\right)^{c_{\alpha}}}.$$
\end{thm}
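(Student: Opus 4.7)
The plan is to start from the representations (\ref{discretegreenrep}) and (\ref{contgreenrep}) combined with the sharp asymptotic (\ref{ax}) for the potential kernel. Setting $z=0$ in (\ref{discretegreenrep}), plugging (\ref{ax}) into both $a(S(T_{D_\alpha})-w)$ and $a(-w)$, and subtracting $\tfrac{2}{\pi}$ times (\ref{contgreenrep}) (also with $z=0$), the $k_0$ constants and the $\tfrac{2}{\pi}\log|w|$ terms cancel and one is left with
\begin{equation*}
G_{D_\alpha}(w)-\tfrac{2}{\pi}g_{D_\alpha}(w)=\tfrac{2}{\pi}\bigl(E^0[\log|S(T_{D_\alpha})-w|]-E^0[\log|B(\tau_{D_\alpha})-w|]\bigr)+\bigo{|w|^{-2}},
\end{equation*}
the $\bigo{|w|^{-2}}$ absorbing the remainder terms from (\ref{ax}) (including $E^0[|S(T_{D_\alpha})-w|^{-2}]$, which a simple hitting-probability argument shows is of the same order). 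The task is thereby reduced to comparing the expected logarithmic distance to $w$ of the exit points of $S$ and $B$.

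To carry this out I would couple $S$ and standard planar Brownian motion $B$ via the KMT strong approximation, arranging on a single probability space that $\max_{k\le Cn^2}|S_k-B_k|=\bigo{\log n}$ with probability $1-\bigo{n^{-A}}$ for any prescribed $A$. Since both processes exit $D_\alpha$ in polynomial time with overwhelming probability, at the first exit time $\min(T_{D_\alpha},\tau_{D_\alpha})$ the two processes differ by at most $\bigo{\log n}$ and the surviving one lies within that distance of $\partial D_\alpha$. Away from the tip of the slit the boundary of $D_\alpha$ is locally flat (either the outer circle or one of the two straight edges), so the surviving process exits within an additional $\bigo{\log n}$ displacement; after dividing by the distance from $w$ to that piece of the boundary this contributes an error of order $\log n/n$, which is negligible compared to $(\log^2 n/n)^{c_\alpha}$ since $c_\alpha\le 1$.

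The main obstacle, and the source of the exponent $c_\alpha$, is the behaviour near the tip, where $D_\alpha$ locally looks like a wedge of opening $2\pi-\alpha$. The conformal map $z\mapsto z^{c_\alpha}$ sends this wedge to the upper half-plane, and via a Beurling-type projection argument one finds that a process starting inside $D_\alpha$ at distance $\rho$ from the tip travels an additional distance at least $R$ before exiting $D_\alpha$ with probability at most $\bigo{(\rho/R)^{c_\alpha}}$. Taking $\rho$ of order $\log n$ (the KMT error) and integrating the tail from $\rho$ up to the diameter $\sim n$ produces an expected additional displacement of order $(\log n)^{c_\alpha}n^{1-c_\alpha}$; after dividing by the $\sim n$ distance from $w$ to the tip and paying an extra factor polynomial in $\log n$ both to transfer the wedge estimate from $B$ to the coupled $S$ and to absorb the low-probability KMT failure event, this yields the stated bound $(\log^2 n/n)^{c_\alpha}$. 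The technical heart of the argument is establishing the wedge-type hitting estimate uniformly for the coupled pair $(S,B)$ near the tip so that the exponent $c_\alpha$ is preserved on both sides of the coupling; this is where the logarithmic sharpness of KMT, rather than a weaker coupling, is essential.
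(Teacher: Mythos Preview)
Your overall strategy---reduce via (\ref{discretegreenrep}), (\ref{ax}), (\ref{contgreenrep}) to a difference of expected log-distances of exit points, couple via KMT, and use a conformal/Beurling wedge estimate to produce the exponent $c_\alpha$---is exactly the route the paper takes. Two points deserve comment.

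First, the paper exploits the symmetry $G_{D_\alpha}(0,w)=G_{D_\alpha}(w,0)$ (and likewise for $g_{D_\alpha}$) to start both processes at $w$ and compare $|S_{T_{D_\alpha}}|$ and $|B_{\tau_{D_\alpha}}|$, i.e.\ distances to the origin, rather than starting at $0$ and measuring distance to $w$ as you do. Because the origin sits near the center of $D_\alpha$, every exit point is at distance $\asymp n$ from it; hence the log-ratio is uniformly bounded and the remainder $E^w[|S_{T_{D_\alpha}}|^{-2}]=O(n^{-2})$ is automatically $O(|w|^{-2})$. In your formulation $w$ may lie arbitrarily close to $\partial D_\alpha$, and then neither the assertion that $E^0[|S(T_{D_\alpha})-w|^{-2}]=O(|w|^{-2})$ nor the claim that the flat-boundary contribution is $O(\log n/n)$ ``after dividing by the distance from $w$'' survives. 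This is a genuine gap, though it disappears immediately once you swap the roles of $0$ and $w$.

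Second, the paper does not split the analysis into ``tip'' versus ``flat'' pieces. Instead it partitions $\partial D_\alpha$ into annular arcs $I_k$ of width $\log^2 n$, proves (Lemma~\ref{beurling1}) the uniform bound $P^x(B_{\tau_{D_\alpha}}\in I_k)\lesssim (k_0k)^{c_\alpha-1}\big/\bigl((k^{c_\alpha}-k_0^{c_\alpha})^2\log^{c_\alpha}n\bigr)$ for $x$ near $I_{k_0}$, transfers it to $S$ via KMT (Lemma~\ref{beurling2}), and then sums to obtain Proposition~\ref{expdiff}. Your statement that away from the tip ``the surviving process exits within an additional $O(\log n)$ displacement'' is not correct as written: even along a flat edge the tail is $P(\text{displacement}>R)\asymp(\log n)/R$, which is precisely what the $I_k$ decomposition captures. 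The summed contribution is nevertheless dominated by the small-$k$ (near-tip) terms, so your heuristic conclusion is right, and your tail integration near the tip is essentially the computation carried out in Proposition~\ref{expdiff}.
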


\section{Proof of Theorem \ref{green}}

\subsection{Exit Probabilities for $S$ and $B$ in $D_\alpha$}

Central to our proof is a coupling between simple random walk and Brownian motion, called the KMT approximation (see \cite{kmt2} and \cite{benesnotes} for a simple argument justifying the extension of the result from dimension one to two) of which we state a consequence in Lemma \ref{kmt} below. In this coupling of planar simple random walk and standard planar Brownian motion, it is random walk at time $2k$ and Brownian motion at time $k$ which are close to each other with high probability. For notational convenience, for the rest of this paper, we let $\tilde{B}$ be standard planar Brownian motion and define for all $t\geq 0$,
\begin{equation}\label{reparam}
B(t) = \tilde{B}(t/2).
\end{equation}
Note that changing the speed of Brownian motion linearly doesn't affect its path properties and, as mentioned before, leaves the corresponding Green's function unchanged.
We also consider $S$ to be interpolated linearly between integer times, that is, for all $t\in \R_+$, 
\begin{equation}\label{interpol}
S_t=S_{\lfloor t\rfloor}+(t-\lfloor t\rfloor)(S_{\lceil t\rceil}-S_{\lfloor t\rfloor}).
\end{equation}
With these definitions, we have
\begin{lem}\label{kmt}
There exist $c_0$ and a probability space containing a planar Brownian motion $B$ as in \eqref{reparam} and a two-dimensional simple random walk $S$ as in \eqref{interpol} such that for all $n$, if $D$ is any set with outer radius at most $3n$, that is, such that $\sup\{|z|:z\in D\}\leq 3n$, then for any $z\in D$,
$$P^z\left(\sup_{0\leq t\leq T_D\vee\tau_D}|S_t-B_t| > c_0\log n\right)=\bigo{n^{-10}},$$
where $P^z$ is the measure associated with $B$ and $S$ both started at $z$, $T_D$ is as in \eqref{T}, and $\tau_D$ is as in \eqref{tau}, but for the reparametrized Brownian motion.
 
\end{lem}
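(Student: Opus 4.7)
The plan is to reduce the statement to the classical one-dimensional KMT strong approximation, control the relevant exit times so the time horizon is polynomial in $n$, and then pass from integer to continuous time.

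First, two-dimensional simple random walk $S_n = (X_n, Y_n)$ admits a useful decomposition under rotation by $45^\circ$: the processes $U_n = X_n + Y_n$ and $V_n = X_n - Y_n$ are \emph{independent} one-dimensional $\pm 1$ random walks, since the map $(x,y)\mapsto(x+y, x-y)$ sends each step of $S$ to a uniform point in $\{\pm 1\}^2$. The corresponding rotation applied to the time-changed planar Brownian motion $B$ from \eqref{reparam} yields two independent standard one-dimensional Brownian motions whose variance at integer time $n$ matches that of $U_n$ and $V_n$. Applying the classical one-dimensional KMT theorem (see \cite{kmt2}, with \cite{benesnotes} supplying the extension to two dimensions) to each component on a common probability space produces a coupling of $S$ and $B$ such that, for every positive integer $N$ and every $\lambda > 0$,
$$P\Bigl(\max_{0 \leq k \leq N} |S_k - B_k| > c_1 \log N + \lambda\Bigr) \leq c_2 N e^{-c_3 \lambda},$$
with absolute constants $c_1,c_2,c_3 > 0$.

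Next, I bound the time horizon. Since $D$ is contained in a disk of radius $3n$ about the origin and $z \in D$, both $T_D$ and $\tau_D$ are dominated by the first exit times of $S$ and $B$ from the disk of radius $6n$ about $z$. Optional stopping on the martingale $|S_k - z|^2 - k$ gives an expected exit time of size $\bigo{n^2}$ for $S$, and iterating the Markov property upgrades this to a Gaussian-type tail; the corresponding Brownian estimate is standard. Hence both $T_D$ and $\tau_D$ are at most $N := c_4 n^2 \log n$ except on an event of probability $\bigo{n^{-10}}$, for a sufficiently large $c_4$.

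On the complement of this event, the KMT bound above with $\lambda = c_5 \log n$ for $c_5$ large enough yields $\max_{0 \leq k \leq N} |S_k - B_k| \leq c_6 \log n$ outside an additional event of probability $\bigo{n^{-10}}$. To pass from integer to continuous time, I use that the linear interpolation \eqref{interpol} forces $|S_t - S_{\lfloor t \rfloor}| \leq 1$, while a union bound over the $N$ unit intervals together with standard Gaussian tail estimates yields $\max_{0 \leq k \leq N}\sup_{s \in [k,k+1]}|B_s - B_k| = \bigo{\sqrt{\log n}}$ outside another event of probability $\bigo{n^{-10}}$. Adding the three contributions and absorbing them into a single constant $c_0$ yields the lemma. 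The only substantive point is the bookkeeping: the exit-time cutoff, the KMT slack $\lambda$, and the interpolation correction must each be taken proportional to $\log n$ with constants tuned so that every failure probability is $\bigo{n^{-10}}$; once this is arranged there is no further difficulty.
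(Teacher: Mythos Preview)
Your argument is correct and follows the standard route: the $45^\circ$ rotation to decouple two-dimensional simple random walk into two independent one-dimensional walks, the one-dimensional KMT coupling applied to each coordinate, a polynomial bound on the exit times via iteration of the Markov property, and finally the passage from integer to continuous time via Gaussian oscillation bounds. Each step is sound and the constants can indeed be tuned as you describe.

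The paper itself does not give a proof of this lemma. It simply states the result as a consequence of the classical KMT theorem \cite{kmt2}, pointing to \cite{benesnotes} for the dimension-two extension, and then uses it as a black box. Your sketch is precisely the argument one expects to find behind those citations (the $45^\circ$ rotation is exactly the ``simple argument justifying the extension of the result from dimension one to two'' alluded to before the lemma), so there is no meaningful difference in approach---you have supplied the details the paper chose to outsource.
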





The following basic estimates will be helpful in obtaining our key estimates below. The first and third follow from estimates for standard Brownian motion that can be found, e.g., in \cite{benesnotes} and the second is obvious.

\begin{lem}\label{toofar} If $B$ is planar Brownian motion as in \eqref{reparam} and $S$ is two-dimensional simple random walk, then there exist constants $C$ and $K$ such that 
\begin{enumerate}
\item[(a)] $P(\sup_{0\leq t \leq 1}|B(t)| \geq r) \leq Ce^{-r^2}.$
\item[(b)] $P(\sup_{0\leq t \leq 1}|S(t)| \geq r) = 0 \text{ if } r>1.$
\item[(c)] $P(\sup_{0\leq t \leq n}|B(t)| \leq r^{-1}n^{1/2}) \leq \exp\{-K r^2\}.$
\end{enumerate}
\end{lem}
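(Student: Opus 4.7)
The plan is to prove the three bounds separately, reducing each to standard facts about Brownian motion and to the trivial bound on the size of simple random walk jumps.

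For (a), I would first use the definition $B(t)=\tilde{B}(t/2)$ in \eqref{reparam} to write $\sup_{0\leq t\leq 1}|B(t)| = \sup_{0\leq s\leq 1/2}|\tilde{B}(s)|$. Writing $\tilde{B}=(W_1,W_2)$ with independent one-dimensional standard Brownian motions, a union bound reduces the problem to controlling $\sup_{0\leq s\leq 1/2}|W_i(s)|$ for each coordinate. The reflection principle gives $P(\sup_{0\leq s\leq 1/2}W_i(s)\geq r/\sqrt{2})=2P(W_i(1/2)\geq r/\sqrt{2})$, and a standard Gaussian tail bound yields an estimate of the form $C'\exp(-r^2/c)$. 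For large $r$ the exponent dominates and we get $Ce^{-r^2}$ by absorbing $c$ after restricting to a slightly smaller threshold; for bounded $r$ the inequality holds trivially by enlarging $C$.

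Part (b) is immediate from the construction of $S$. Since $S$ is started at $0$, takes unit jumps, and is linearly interpolated in \eqref{interpol}, we have $|S_t|\leq 1$ for all $t\in[0,1]$. Hence $\{\sup_{0\leq t\leq 1}|S_t|\geq r\}$ is empty as soon as $r>1$.

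For (c), set $R=r^{-1}n^{1/2}$. Using $B(t)=\tilde{B}(t/2)$ and Brownian scaling, the probability in question equals $P(\sup_{0\leq u\leq 1}|\tilde{B}(u)|\leq \sqrt{2}/r)$, so it suffices to establish a small-ball estimate $P(\sup_{0\leq u\leq 1}|\tilde{B}(u)|\leq\varepsilon)\leq \exp(-K'/\varepsilon^2)$ for standard planar Brownian motion. I would partition $[0,1]$ into $N$ blocks $I_j$ of equal length $\ell\asymp\varepsilon^2$. On the event that the sup of $|\tilde{B}|$ stays below $\varepsilon$, the triangle inequality forces $\sup_{t\in I_j}|\tilde{B}(t)-\tilde{B}(t_j)|\leq 2\varepsilon$ on each block, where $t_j$ is the left endpoint. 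By the Markov property these $N$ events are independent, and by Brownian scaling each has probability bounded by some fixed $q<1$ depending only on the ratio $2\varepsilon/\sqrt{\ell}$, which is a universal constant. Multiplying yields $q^N$, and with $\varepsilon=\sqrt{2}/r$ this is at most $\exp(-Kr^2)$ for $K=|\log q|/2$.

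The main technical work is in (c): one must pick the block length so that the factorization produces exactly $r^2$ in the exponent, and keep track of how small-$r$ cases are absorbed into the constant $K$. Parts (a) and (b) are essentially immediate from the reflection principle and the bounded step size of $S$, respectively.
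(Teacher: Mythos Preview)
The paper does not actually prove this lemma: it just remarks that (a) and (c) follow from standard Brownian estimates (citing \cite{benesnotes}) and that (b) is obvious. So your proposal already supplies more than the paper does, and parts (b) and (c) of your outline are correct; the block argument for (c) is the standard small-ball estimate and delivers exactly $\exp\{-Kr^2\}$.

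There is, however, a genuine gap in your argument for (a). The coordinatewise reduction gives only $C'e^{-r^2/2}$, not $Ce^{-r^2}$: if $|\tilde B(s)|\ge r$ then some $|W_i(s)|\ge r/\sqrt{2}$, and since $W_i(1/2)$ has variance $1/2$, the reflection principle and the Gaussian tail give
\[
P\Bigl(\sup_{0\le s\le 1/2}|W_i(s)|\ge r/\sqrt{2}\Bigr)\le 4\,P\bigl(W_i(1/2)\ge r/\sqrt{2}\bigr)\lesssim e^{-r^2/2}.
\]
Your claim that one can then ``absorb $c$'' to pass from $e^{-r^2/2}$ to $Ce^{-r^2}$ is false: $e^{-r^2/2}/e^{-r^2}=e^{r^2/2}\to\infty$, so no finite $C$ works, and adjusting the threshold does not help. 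To obtain the stated exponent one must argue directly with the planar process rather than coordinate by coordinate; for instance $P(|\tilde B(1/2)|\ge r)=e^{-r^2}$ exactly, and the running maximum can be compared to the endpoint via the strong Markov property at the hitting time of $\partial D(0,r)$. That said, every use of part (a) in the paper only requires a bound of the form $Ce^{-cr^2}$ for some $c>0$, so the gap is harmless for the applications here.
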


At the center of our argument are the following lemmas which estimate the probability of $B$ or $S$ leaving $D_{\alpha}$ in some small subset of the boundary. The first follows from conformal invariance of planar Brownian motion and the second is obtained from the first using the KMT coupling of Lemma \ref{kmt}.

Recall the definition of $D_\alpha$ in \eqref{da} and of $z_0$ in the following line. Let
$$N=\lceil 2n/\log^2 n \rceil$$
and define for $1\leq k\leq N$,
$$
I_k=\{z\in\bd D_\alpha: (k-1)\log^2 n\leq |z+z_0| < k\log^2 n\}.
$$
Note that since by $\log n$ we mean the natural logarithm, $n/\log^2 n$ cannot be an integer, so that $I_N$ contains the circular part of  $\bd D_{\alpha}$. Recall the definition of $c_\alpha$ in \eqref{ca}.

\begin{lem}\label{beurling1} 
Fix $\alpha\in [0,\pi]$ and $a>0$. Then for all $n$ large enough, 
all $x\in D_{\alpha}$ satisfying $d(x,\bd \Da)\leq a\log n$ and $d(x,\bd \Da)=d(x,I_{k_0})$, and all $1\leq k\leq N$ for which $|k_0-k|\geq 2$, 
$$P^x(B_{\tau_{D_{\alpha}}}\in I_k)\lesssim \frac{(k_0k)^{\ca-1}}{(k^{\ca}-k_0^{\ca})^2\log^{c_\alpha} n}.$$
\end{lem}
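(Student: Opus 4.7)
The plan is to transfer the computation to an upper half-disk via conformal invariance of planar Brownian motion, and to use the explicit half-plane Poisson kernel there. Write $c=c_\alpha$, $L=\log^2 n$, and $R=(2n)^{c}$. The map $\phi(z)=(z+z_0)^{c}$ is a conformal bijection from $D_\alpha$ onto the upper half-disk $\mathbb{D}_R^+ = \{w : |w| < R,\, \mathrm{Im}(w) > 0\}$: in the shifted coordinate $w=z+z_0$, the wedge $\{|w|<2n,\;0<\arg w<2\pi-\alpha\}$ is sent via $w\mapsto w^{c}$ to $\mathbb{D}_R^+$, with the two straight wedge sides mapping to the segments $[-R,0]$ and $[0,R]$ and the outer circular arc mapping to the upper semicircle. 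By conformal invariance,
\[
P^x\bigl(B_{\tau_{D_\alpha}}\in I_k\bigr)
\;=\;
P^{\phi(x)}\bigl(B'_{\tau'}\in \phi(I_k)\bigr),
\]
where $B'$ is planar Brownian motion run until exit from $\mathbb{D}_R^+$. For $k<N$, $\phi(I_k)$ consists of two real-line intervals of the form $\pm[((k-1)L)^{c},(kL)^{c}]$, and since Brownian motion in $\mathbb{D}_R^+$ that exits on the real segment has the same exit point as Brownian motion in the full half-plane $\mathbb{H}$ started from the same point, the half-disk harmonic measure of any such subset of the real segment is dominated by the half-plane Poisson integral.

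Next I would bound the Poisson integral crudely by $\mathrm{Im}(\phi(x))\cdot|\phi(I_k)|/\mathrm{dist}(\phi(x),\phi(I_k))^2$, which is justified since $|k_0-k|\ge 2$ keeps $\phi(x)$ well separated from $\phi(I_k)$. Using $|\phi'(w)|=c|w|^{c-1}$ together with the hypotheses $|x+z_0|\asymp k_0 L$ and (in coordinates in which the nearest wedge side is the positive real axis) $\mathrm{Im}(x+z_0)\le a\log n$, one obtains
\[
\mathrm{Im}(\phi(x)) \;\lesssim\; k_0^{c-1}\,L^{c-1}\log n.
\]
Each of the two real-line components of $\phi(I_k)$ has length $L^{c}(k^{c}-(k-1)^{c})\lesssim L^{c}k^{c-1}$, and $\mathrm{dist}(\phi(x),\phi(I_k))\gtrsim L^{c}|k^{c}-k_0^{c}|$. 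Multiplying and using $L=\log^2 n$ yields
\[
P^x\bigl(B_{\tau_{D_\alpha}}\in I_k\bigr)
\;\lesssim\;
\frac{(k_0 k)^{c-1}}{\log n\,(k^{c}-k_0^{c})^2},
\]
which implies the claim since $c\le 1$.

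The main obstacle will be uniform control in the two extreme regimes. The small-$k_0$ case (especially $k_0 = 1$) is delicate because $|x+z_0|$ can be close to zero and the estimate $|\phi'|\asymp(k_0 L)^{c-1}$ breaks down; one has to replace it by a direct polar computation of $|\phi(x)|$ and $\mathrm{Im}(\phi(x))$. For $k=N$, the image $\phi(I_N)$ also contains the upper semicircle of $\partial \mathbb{D}_R^+$, so exit through the straight part of $I_N$ is handled as above and exit through the outer arc requires a separate estimate, for instance via the further conformal map $w\mapsto\bigl((R+w)/(R-w)\bigr)^2$, which sends $\mathbb{D}_R^+$ to $\mathbb{H}$ with the upper arc becoming the negative real axis, reducing the problem back to a half-plane Poisson integral.
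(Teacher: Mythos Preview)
Your main-case argument is essentially the paper's: the same power map to an upper half-disk, the same domination by the half-plane Poisson kernel, and the same arithmetic with $|\phi'|\asymp |\cdot|^{c-1}$. For $k_0<N$ and $k<N$ your bound $\frac{(k_0k)^{c-1}}{(k^{c}-k_0^{c})^2\log n}$ is exactly what the paper obtains (and then weakens to $\log^{c}n$ in the denominator because of the $k_0=1$ case, just as you anticipate).

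There is, however, a genuine gap: you have not covered the case $k_0=N$ where the closest boundary point to $x$ lies on the \emph{circular arc} rather than on a straight side. In that regime $\phi(x)$ sits near the upper semicircle of $\mathbb{D}_R^+$, not near the real axis, so your key estimate $\mathrm{Im}(\phi(x))\lesssim k_0^{c-1}L^{c-1}\log n$ fails outright; one has $\mathrm{Im}(\phi(x))$ of order $R=(2n)^{c}$. Dominating by the half-plane Poisson kernel then gives only $P\lesssim k^{c-1}/N^{c}$, which for small $k$ is too large by a factor $\sim n/(\log n)^{2-c}$ compared with the claimed bound $\frac{(Nk)^{c-1}}{(N^{c}-k^{c})^2\log^{c}n}$. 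Your ``extreme regimes'' paragraph treats $k=N$ (target on the arc) but not $k_0=N$ (start near the arc). The paper handles $k_0=N$ by a separate two-step argument: apply the strong Markov property at the first exit from a ball of radius $d/2$ about the nearest boundary point $x_0$ (with $d\gtrsim(N-k)\log^2 n$), use a gambler's-ruin type bound $\lesssim \log n/d$ for the probability of escaping that far inside $D_\alpha$, and then bound the harmonic measure of $I_k$ from a point at distance $\asymp d$ via the conformal map. In principle your suggested second conformal map $w\mapsto((R+w)/(R-w))^2$ could be pressed into service here too, but that is a different calculation from the $k=N$ one you sketch and would need to be carried out.

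One minor point: your justification ``Brownian motion in $\mathbb{D}_R^+$ that exits on the real segment has the same exit point as Brownian motion in $\mathbb{H}$'' is not literally correct (conditioning on exiting through the real segment changes the distribution), but the inequality you want follows directly from monotonicity of harmonic measure under domain enlargement, $\mathbb{D}_R^+\subset\mathbb{H}$.
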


 \begin{figure}[htb!]
\centering%
\includegraphics[scale=0.84]{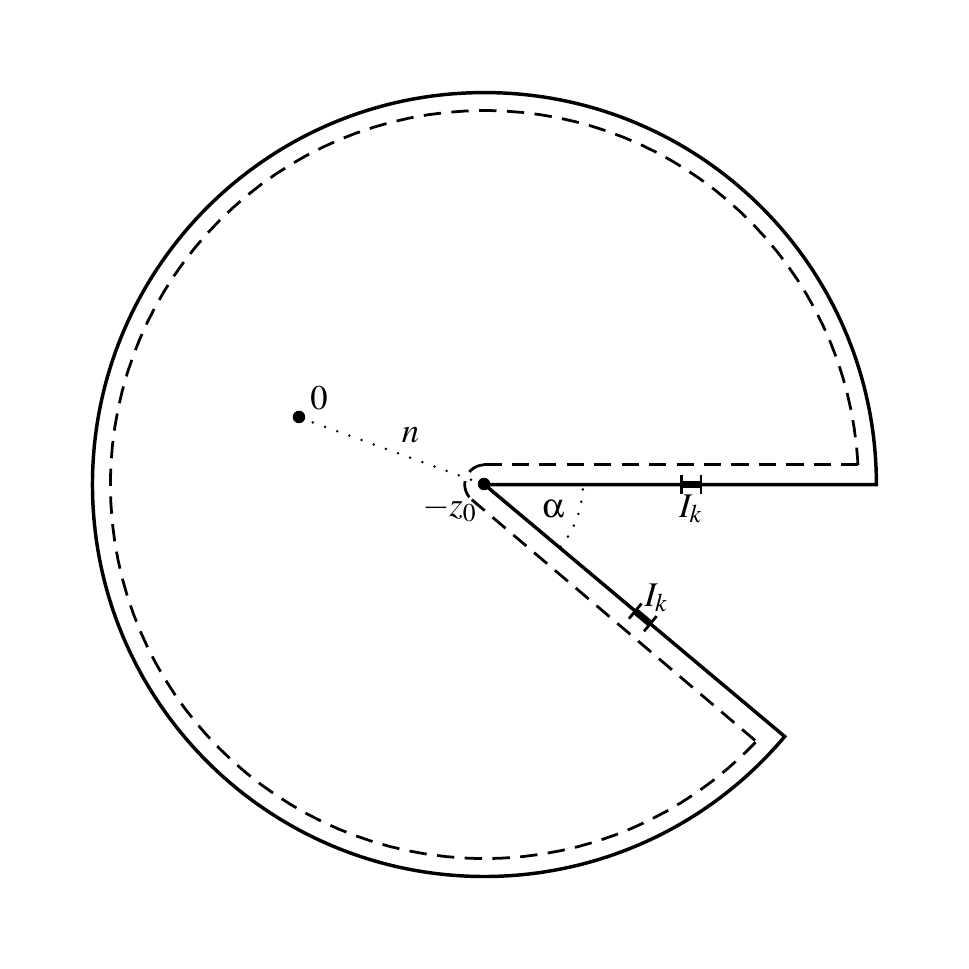}
\caption{
$\Da$, together with $I_k$ for some arbitrary $1<k<N$. The dashed line is the set of points in $\Da$ at distance $2c_0\log n$ from $\bd \Da$
corresponding to the hitting time $\eta$ in the proof of Theorem \ref{green}.}
\label{Fig1}
\end{figure}

\begin{proof} We will assume without loss of generality that 
$x$
satisfies $\arg (x+z_0)\in [0,\pi-\alpha/2]$, in other words, is in the ``top half" of $\bd D_{\alpha}$.

We will consider first the case where $k<N, k_0<N$. Note that the map $f=f_\alpha$ defined by 
$$f_{\alpha}(z)=\left(\frac{z+z_0}{2n}\right)^{c_{\alpha}}$$
sends the domain $D_{\alpha}$ to the unit upper half-disk 
\begin{equation}\label{d+}
\D^+=\{z\in\C:\im(z)>0, |z|<1\}
\end{equation} 
and satisfies 
$$f(0)=\left(\frac{1}{2}\right)^{c_\alpha}i\left(1+\bigo{n^{-1}}\right).$$ 
It is also easy to verify that if for $2\leq k_0\leq N-1$, $d(x,\bd\Da)=d(x,x_0)$ with $x_0\in  I_{k_0}$, then
$$f(x)=\frac{x_0^{c_{\alpha}}}{(2n)^{c_{\alpha}}}\left(1+(1+i)\bigo{\frac{\log n}{x_0}}\right),$$
 and if $d(x,\bd D_{\alpha})=d(x,I_1)$, then
$$0\leq \re(f(x))\leq \left(\frac{\log^2 n}{n}\right)^{c_{\alpha}}, \quad 0\leq \im(f(x)) \lesssim \left(\frac{\log n}{n}\right)^{c_{\alpha}}.$$
Moreover,
$$f(I_k)=\left(-\left(\frac{k}{2n}\right)^{c_{\alpha}}\log^{2c_{\alpha}}n, -\left(\frac{k-1}{2n}\right)^{c_{\alpha}}\log^{2c_{\alpha}}n\right] \cup \left[\left(\frac{k-1}{2n}\right)^{c_{\alpha}}\log^{2c_{\alpha}}n, \left(\frac{k}{2n}\right)^{c_{\alpha}}\log^{2c_{\alpha}}n\right).$$
Then, with $\D+$ as in \eqref{d+}, for all $x$ with $d(x,\bd\Da)\leq a\log ^2 n, d(x,\bd\Da)=d(x,I_{k_0})$ for some $1\leq k_0<N$, 
\begin{eqnarray}\label{gen}\notag
 P^x\left(B(\tau_{D_\alpha})\in I_k\right) &=& P^{f(x)}\left(B(\tau_{\D^+})\in f(I_k)\right) \leq  P^{f(x)}\left(B(\tau_{\h})\in f(I_k)\right)\\ \notag
& \lesssim & P^{i}\left(B(\tau_{\h})\in \left[\log^{c_\alpha} n k_0^{1-c_{\alpha}}|(k-1)^{c_{\alpha}}-k_0^{c_{\alpha}}|,\log^{c_\alpha} n k_0^{1-c_{\alpha}}|k^{c_{\alpha}}-k_0^{c_{\alpha}}|\right)\right)\\ 
& \lesssim & \frac{(k_0k)^{\ca-1}}{(k^{\ca}-k_0^{\ca})^2\log^{c_\alpha} n}.
\end{eqnarray}
where the equality follows from conformal invariance of planar Brownian motion, the first inequality follows from translation and scaling invariance of Brownian motion and the last inequality follows from the fact that for Brownian motion started at $i$, $B(\tau_{\h})$ has the Cauchy distribution, the equality $\arctan(x)+\arctan(1/x)=\frac{\pi}{2}$, and the Taylor expansion of $\arctan$ at the origin. In fact, the computation yields the better bound of $\frac{(k_0k)^{\ca-1}}{(k^{\ca}-k_0^{\ca})^2\log n}$ for all $k_0\geq 2$, but not for $k_0=1$.

Consider now the case where $k_0=N$ and let $t_{x_0}$ be the tangent line to $\bd D_{\alpha}$ at $x_0$, a closest point to $x$ in $\bd\Da$ and $\h_{x_0}$ the half-plane with boundary $t_{x_0}$ containing the origin. Then 
\begin{equation}\label{d}
d:=\inf\{|w-x_0|:w\in I_k\}\gtrsim (N-k)\log^2 n. 
\end{equation}
The strong Markov property applied at time $\tau_{D\left(x_0,\frac{d}{2}\right)}$ implies that
\begin{equation}\label{12-3}
P^x(B(\tau_{D_\alpha})\in I_k)  \leq  P^x\left(\tau_{D\left(x_0,\frac{d}{2}\right)\cap D_{\alpha}} < \tau_{D_{\alpha}}\right)\sup_{z\in D\left(x_0,\frac{d}{2}\right)}P^z(B(\tau_{D_\alpha})\in I_k).
\end{equation}
If $x_k^{(1)}, x_k^{(2)}$ are the midpoints of the two segments forming $I_k$, we can use the fact that since $x_0\in I_N, \max\{d,x_k^{(i)}\}\asymp n$ for $i=1,2,$ to easily verify that there is a constant $C$ such that for $i=1, 2$, 
$$f\left(D\left(x_k^{(i)},\frac{d}{2}\right)\cap D_\alpha\right) \supset D\left(f(x_k^{(i)}),C\frac{d}{n}\right)\cap\D_+.$$
Therefore, since the width of each segment of $f(I_k)$ is 
$$f(k\log^2 n-z_0)-f((k-1)\log^2 n-z_0) \lesssim \left(\frac{k\log^2 n}{2n}\right)^{c_\alpha}k^{-1},$$ we have
\begin{eqnarray}\label{second} \notag
\sup_{z\in D\left(x_0,\frac{d}{2}\right)}P^z(B(\tau_{D_\alpha})\in I_k) & \leq & \sup_{z\in D\left(x_k^{(1)},\frac{d}{2}\right)^c\cap D\left(x_k^{(2)},\frac{d}{2}\right)^c\cap \Da}P^z(B(\tau_{D_\alpha})\in I_k)\\ \notag
&\lesssim & \sup_{z\in D\left(f(x_k^{(1)}),\frac{d}{n}\right)^c\cap D\left(f(x_k^{(2)}),\frac{d}{n}\right)^c\cap\D_+}P^z(B(\tau_{\h})\in f(I_k))\\
&\lesssim & \frac{\left(\frac{k\log^2 n}{n}\right)^{c_\alpha}k^{-1}}{dn^{-1}}\lesssim \left(\frac{k}{N}\right)^{c_\alpha-1}\frac{\log^2 n}{d},
\end{eqnarray}
where the penultimate 
inequality follows from the fact that the exit distribution of the half-plane has the Cauchy distribution.

Moreover, 
\begin{equation}\label{first}
P^x\left(\tau_{D\left(x_0,\frac{d}{2}\right)\cap D_{\alpha}} < \tau_{D_{\alpha}}\right)  \leq  P^x\left(\tau_{D\left(x_0,\frac{d}{2}\right)\cap \h_{x_0}} < \tau_{D_{\alpha}}\right) \lesssim \frac{\log n}{d},
\end{equation}
where the second inequality is essentially the Gambler's ruin estimate but can be shown rigorously, via conformal invariance, using the fact that the map $-(z+z^{-1})$ is a conformal transformation of the upper unit half-disk into the upper half-plane and, again, the fact that the exit distribution of the half-plane has the Cauchy distribution.

Plugging \eqref{second} and \eqref{first} into \eqref{12-3} and using \eqref{d} now implies that
\begin{equation}\label{extreme}
P^x(B(\tau_{D_\alpha})\in I_k)  
\lesssim \left(\frac{k}{N}\right)^{c_\alpha-1}\frac{1}{(N-k)^2\log n}.
\end{equation}

The lemma now follows from the fact that the bound in \eqref{extreme} is of order at most that in \eqref{gen} when $k_0=N$.

\end{proof}

\begin{lem}\label{beurling2} Fix $\alpha\in [0,\pi]$ and $a>0$. Then for all $n$ large enough, 
all $x\in D_{\alpha}\cap\Z^2$ satisfying $d(x,\bd \Da)\leq a\log n$ and $d(x,\bd \Da)=d(x,I_{k_0})$, and all $1\leq k\leq N$ for which $|k_0-k|\geq 2$, 
$$P^x(S_{T_{D_{\alpha}}}\in I_k)\lesssim \frac{(k_0k)^{\ca-1}}{(k^{\ca}-k_0^{\ca})^2\log^{c_\alpha} n}.$$
\end{lem}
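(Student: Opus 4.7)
My plan is to transfer the Brownian bound of Lemma \ref{beurling1} to the simple random walk via the KMT coupling of Lemma \ref{kmt}. Set
$$A_n = \left\{\sup_{0 \le t \le T_{D_\alpha} \vee \tau_{D_\alpha}} |S_t - B_t| \le c_0 \log n\right\};$$
since $D_\alpha$ has outer radius at most $3n$, Lemma \ref{kmt} gives $P^x(A_n^c) = \bigo{n^{-10}}$, which is polynomially smaller than the claimed right-hand side, so it will suffice to bound $P^x(\{S_{T_{D_\alpha}} \in I_k\} \cap A_n)$. I introduce the slightly inflated domain $D_\alpha^+ := \{z \in \C : d(z, D_\alpha) \le 2 c_0 \log n\}$, still a pacman-type region differing from $D_\alpha$ only on a logarithmic scale. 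On $A_n$, $B$ remains in $D_\alpha^+$ throughout $[0, T_{D_\alpha} \vee \tau_{D_\alpha}]$: at each such time at least one of $S_t, B_t$ is in $\overline{D_\alpha}$ ($B_t$ before $\tau_{D_\alpha}$ and $S_t$ before $T_{D_\alpha}$, up to the usual unit correction coming from the interpolated walk), and the coupling places the other within $c_0 \log n$ of $\overline{D_\alpha}$. In particular $T_{D_\alpha} \le \tau_{D_\alpha^+}$, and on $\{S_{T_{D_\alpha}} \in I_k\} \cap A_n$ the point $B_{T_{D_\alpha}}$ lies in the open $c_0 \log n$-neighborhood $U_k$ of $I_k$, so $B$ enters $U_k$ strictly before $\tau_{D_\alpha^+}$.

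Let $J_k := I_{k-1} \cup I_k \cup I_{k+1}$ and let $J_k^+$ denote the corresponding arc of $\bd D_\alpha^+$. The strong Markov property of $B$ at its first hitting time of $U_k$, combined with a half-plane harmonic-measure estimate (a point at distance $\le c_0 \log n$ from a boundary arc of length $\sim 3 \log^2 n \gg c_0 \log n$ exits into that arc with probability bounded below by a positive constant), yields
$$P^x\!\left(\{S_{T_{D_\alpha}} \in I_k\} \cap A_n\right) \lesssim P^x\!\left(B_{\tau_{D_\alpha^+}} \in J_k^+\right).$$
Since $D_\alpha^+$ and $D_\alpha$ agree up to a logarithmic perturbation, the conformal-map argument of Lemma \ref{beurling1} carries over essentially verbatim to $D_\alpha^+$ with arc $J_k^+$, and summing the resulting bound over the three constituent subarcs recovers the claimed right-hand side. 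A small separate estimate is needed in the borderline case $|k_0 - k| = 2$, in which one index $j \in \{k-1, k, k+1\}$ satisfies $|k_0 - j| = 1$ and therefore just falls outside the hypothesis of Lemma \ref{beurling1}; in this case the probability is handled directly by the half-plane estimate already invoked above.

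The main technical obstacle is the bookkeeping involved in passing from Lemma \ref{beurling1} on $D_\alpha$ to its analogue on $D_\alpha^+$: the conformal map associated to $D_\alpha^+$ differs from $f_\alpha$ only by a factor $1 + \bigo{(\log n)/n}$, and the constants in the computation must be checked to absorb this perturbation. The scale separation between $c_0 \log n$ and $\log^2 n$ is exactly what makes the whole argument go through cleanly.
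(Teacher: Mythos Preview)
Your approach coincides with the paper's argument for $\alpha>0$: enlarge $D_\alpha$ by a $O(\log n)$ margin, use KMT to force $B$ to visit a $O(\log n)$-neighborhood of $I_k$ before exiting the enlarged domain, then use the strong Markov property and a half-plane estimate to convert this into a harmonic-measure bound on the enlarged domain, and finally rerun Lemma~\ref{beurling1} there. The paper does exactly this (with a slightly different enlargement $D_\alpha'$ whose wedge vertex is translated rather than blunted), so for $\alpha\in(0,\pi]$ your proposal is correct and essentially the same as the paper's.

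The genuine gap is the case $\alpha=0$. Your inflated domain $D_\alpha^+=\{z:d(z,D_\alpha)\le 2c_0\log n\}$ is \emph{not} ``still a pacman-type region'' when $\alpha=0$: here $D_0$ is a disk minus a radial slit, and taking a $2c_0\log n$-neighborhood simply erases the slit, leaving an ordinary disk. The conformal map of a disk bears no resemblance to $f_0(z)=((z+z_0)/2n)^{1/2}$, so the claim that ``the conformal map associated to $D_\alpha^+$ differs from $f_\alpha$ only by a factor $1+O((\log n)/n)$'' is false, and the transfer of Lemma~\ref{beurling1} breaks down completely. (A milder version of the same problem appears for small fixed $\alpha>0$ near the vertex, but there it is harmless because all relevant arcs sit at distance $\gtrsim\log^2 n$ from the vertex.)

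The paper treats $\alpha=0$ by an entirely separate and considerably longer argument: one cannot enlarge the domain, so instead one tracks crossings of $B$ through a two-sided tube $\mathcal{S}$ of width $O(\log n)$ around the slit, introduces a crossing time $\eta$ that detects when $B$ has genuinely passed from one side of the slit to the other inside the tube, and controls the event $\{S_{T_{D_0}}\in I_k\}$ via $\{\sigma_{\mathcal{S}_k^+}\le\eta\}$. The analysis then splits according to whether $B$ reaches the tip region $\mathcal{S}_{\text{end}}$ first, uses a reflection argument and an iterative bound on the probability of re-entering the tip, and only then feeds back into Lemma~\ref{beurling1}. Your proposal contains none of this machinery, and without it the $\alpha=0$ case (which is precisely the extremal slit case motivating the whole paper) is not covered.
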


\begin{proof}
We use the KMT coupling of Lemma \ref{kmt} to derive this estimate from the analogous estimate for Brownian motion in Lemma \ref{beurling1}.

Assume first that $\alpha\neq 0$ and define 
$$z_0'=z_0-2c_0\log n(\cot(\alpha/2)-i).$$
Note that $-z_0'\not\in\Da$ is the point of intersection of two lines that are parallel to the segments of $\partial \Da$ and at distance $2c_0\log n$ of those segments. We then define
$$\Da' = \{z\in\C:|z+z_0|\leq 2n+2c_0\log n\}\setminus \{x\in\C:\arg(z+z_0')\in (-\alpha,0)\}$$
and, for $1\leq k\leq N$, 
$$I'_k=\{z\in\bd \Dap:d(z,\bd\Da)=d(z,I_k)\}.$$ 
We assume for the rest of the proof that $n$ is large enough so that $I'_k\neq\emptyset$ for all $1\leq k\leq N$.


We now couple $B$ and $S$ as in Lemma \ref{kmt} and define 
\begin{equation}\label{k}
\mathcal{K}=\left\{\sup_{0\leq t\leq T_{D_{\alpha}}\vee\tau_{D_{\alpha}}}|S_{t}-B_t| \leq  c_0\log n\right\}.
\end{equation}
Then, by Lemma \ref{kmt}, 
\begin{equation}\label{kc}
P(\K^c)=\bigo{n^{-10}}.
\end{equation}
For $1\leq k\leq N$, let
$$\mathcal{R}'_k = \{z\in \Da':d(z,I_{k}')\leq 10c_0\log n\}$$
and note that $I_k\subset \mathcal{R}'_k$ and for any $z\in I_k, d(z,\partial \mathcal{R}'_k)\geq 2c_0\log n$. By Lemma \ref{toofar} (b),
$$\{S_{T_{\Da}}\in I_k, T_{\Da}\in [j-1,j), \K\}\subseteq \{B_{j-1}\in \mathcal{R}'_k, \K \}
.$$
Then, by \eqref{kc}, since on $\K, \tau_{\Da'}\geq T_{\Da}$,
\begin{eqnarray}\label{main3}\notag
P^x(S_{T_{D_{\alpha}}}\in I_k) & \leq &  \sum_{j\geq 1}P^x(S_{T_{D_{\alpha}}}\in I_k, T_{D_{\alpha}}\in [j-1,j), \K)+\bigo{n^{-10}}\\ \notag
&\leq & \sum_{j\geq 1} P^x(S_{T_{D_{\alpha}}}\in I_k, T_{D_{\alpha}}\in [j-1,j), \tau_{D_{\alpha}'}\geq j-1, B_{j-1}\in\mathcal{R}'_k, \K)+\bigo{n^{-10}}
\\ \notag
&\lesssim & \sum_{j\geq 1}P^x(T_{D_{\alpha}}\in [j-1,j), \tau_{D_{\alpha}'\setminus \mathcal{R}'_k}\leq \tau_{D_{\alpha}'})+\bigo{n^{-10}}\\  
&= & P^x(\tau_{\Dap\setminus \mathcal{R}'_k}\leq \tau_{D_{\alpha}'})
+\bigo{n^{-10}}
\end{eqnarray}
 
 With the convention $I'_0=I'_{N+1}=\emptyset$, we let, for $1\leq k\leq N$,
$$I_{k,+}'=\cup_{i=k-1}^{k+1}I_i'.
$$ 

 We now claim that 
\begin{equation}\label{comparable}
P^x(\tau_{\Dap\setminus \mathcal{R}_k}\leq \tau_{D_{\alpha}'})\lesssim P^x(B_{\tau_{D_{\alpha}'}}\in I_{k,+}').
\end{equation}
Indeed, by the strong Markov property for Brownian motion,
\begin{eqnarray*}
P^x(\tau_{\Dap\setminus \mathcal{R}'_k}\leq \tau_{D_{\alpha}'})&\leq & P^x(B_{\tau_{D_{\alpha}'}}\in I_{k,+}')+P^x(\tau_{\Dap\setminus \mathcal{R}'_k}\leq \tau_{D_{\alpha}'},B_{\tau_{D_{\alpha}'}}\not\in I_{k,+}')\\
&=&P^x(B_{\tau_{D_{\alpha}'}}\in I_{k,+}')+P^x(\tau_{\Dap\setminus \mathcal{R}'_k}\leq \tau_{D_{\alpha}'})\bigo{\log^{-1} n}.
\end{eqnarray*}
Equations \eqref{main3} and \eqref{comparable} now imply
$$P^x(S_{T_{D_{\alpha}}}\in I_k)  \lesssim P^x(B_{\tau_{D_{\alpha}'}}\in I_{k,+}') +\bigo{n^{-10}}$$
and the lemma now follows from an slight modification of Lemma \ref{beurling1} to $D_{\alpha}'$ (note that $D_{\alpha}'$ is not just a rescaled version of $D_{\alpha}$, so Lemma \ref{beurling1} cannot be applied directly but the argument of the proof of that lemma yields the same bound for $P^x(B_{\tau_{D_{\alpha}'}}\in I_k')$ as in Lemma \ref{beurling1}).

If $\alpha = 0$, we need to use a slightly different argument from the one we just used. For a set $D\subset \C$, we let 
$$\sigma_D=\tau_{D^c}=\inf\{t\geq 0: B_t\in D\}$$ 
be the first hitting time of $D$ by $B$.
We will write $D_0$ for the set $D_\alpha$ with $\alpha=0$ and for $z\in\C, r\in\R_+$, we let $C(z,r)$ be the circle of radius $r$, centered at $z$. With $a$ as in the statement of the lemma, we let 
$$
b=\max\{ac_0,2c_0\}
$$
and define
$$\mathcal{S}=\{z\in \C:d(z,\bd D_0)\leq b\log n\}\setminus\{z\in\C:|\im(z+z_0)|\leq b\log n,|\re(z+z_0)|\leq b\log n\},$$
$$\S_{\text{top}}=\{z\in \bd\S\cap D_0:\im(z+z_0)=b\log n\} \cup C(-z_0,2n-b\log n),$$ 
$$\S_{\text{bot}}=\{z\in \bd\S\cap D_0:\im(z+z_0)=-b\log n\} \cup C(-z_0,2n+b\log n),$$
$$\S_{\text{end}}=\{z\in\C:|\im(z+z_0)|\leq b\log n,|\re(z+z_0)|\leq b\log n\},$$
$$\L_1= \{z\in D_0\cup\S:\im(z+z_0)=0, \re(z+z_0)\leq b\log n\},$$
For $1\leq k\leq N$, let
$$\mathcal{S}_k=\{z\in \S:d(z,\bd D_0)=d(z,I_k)\}$$
and for $2\leq k\leq N-1$, define
$$\mathcal{S}_k^+=\S_{k-1}\cup\S_{k}\cup\S_{k+1}.$$
 \begin{figure}[htb!]
\centering%
\includegraphics[scale=0.84]{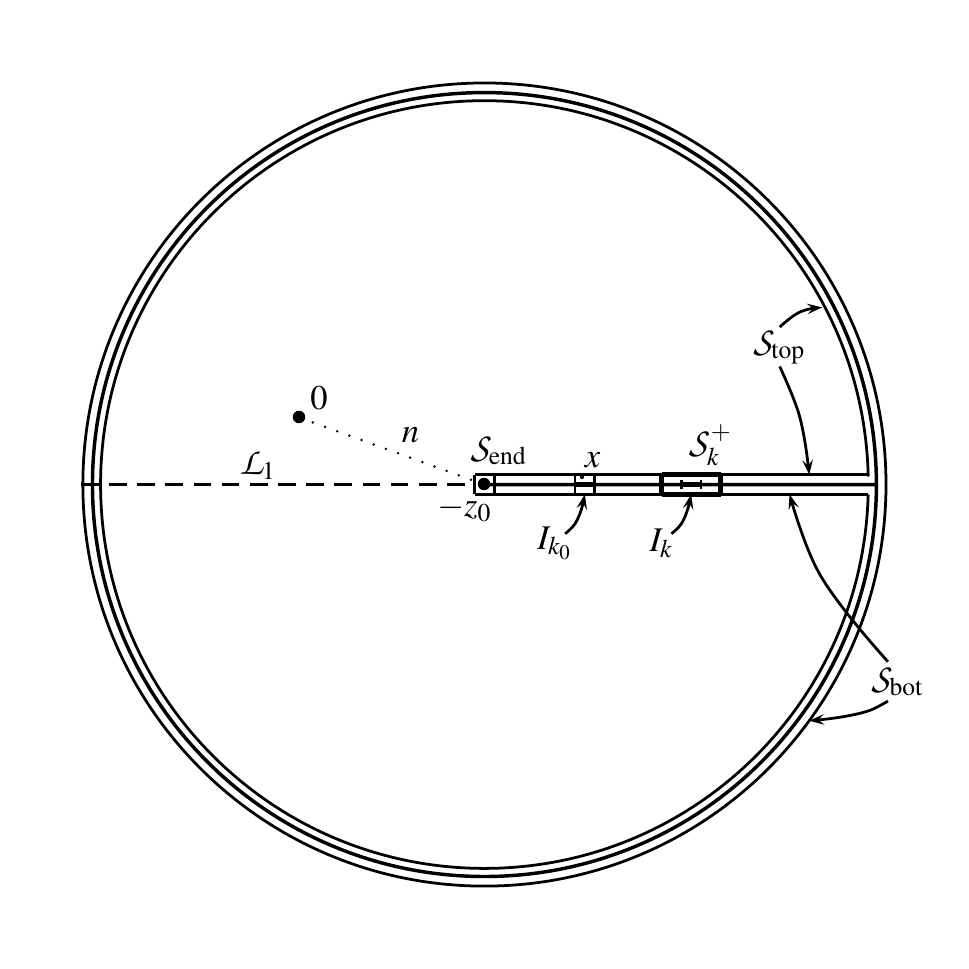}
\caption{
The protagonists of the proof of Lemma \ref{beurling2} in the case $\alpha=0$.}
\label{Fig2}
\end{figure}
Note that $b$ is defined in such a way that if $x$ is as in the statement of the lemma, then $x\in\S\cup\mathcal{D}$. We assume for the rest of the proof, without loss of generality, that $\im(x+z_0)\geq 0$. The main idea of this proof is to start $B$ and $S$ from $x$ and couple them as in \eqref{kmt} and note that in order for $S$ to leave $D_0$ at $I_k, 2\leq k\leq N-1$, $B$ can't 
reach $\S_{\text{bot}}$ before entering $\mathcal{S}_k^+$ or hitting $\L_1
$, since in that case, $S$ would necessarily either have hit $\R_+$ at a point outside of $I_K$ or have hit the circle $C(-z_0,2n)$ and would therefore have left $D_0$ without hitting $I_k$. 

We let
$$\eta\!=\! \inf\{t\geq 0\!:\! \exists \, s\leq t \text{ s.t. } \!\! B_s\in \S_{\text{top}}, B_t\!\in \S_{\text{bot}}, B[s,t]\subseteq \S\! \text{ or }  \!B_s\!\in \S_{\text{bot}}, B_t\in \S_{\text{top}}, B[s,t]\subseteq \S\},$$

The times $\eta$ and $\sigma_{\S_{\text{bot}}}$ should typically be close to each other and it would be convenient below to be able to replace $\eta$ by $P^x(S_{T_{D_0}}\in I_k)$. The main difficulty in our estimate of $P^x(S_{T_{D_0}}\in I_k)$ is dealing with the cases where $\max\{\sigma_{\S_{\text{bot}}},\sigma_{\S_{\text{top}}}\}\neq \eta$. This can happen either in the vicinity of $\S_{\text{bot}}$ or if the Brownian path avoids $\S_{\text{bot}}$ but hits $\S_{\text{top}}$ ``from above" and $\S_{\text{bot}}$ from below. 

Note first that
\begin{equation}\label{firstdecomp} 
P^x(S_{T_{D_0}}\in I_k)  \leq  P^x(\sigma_{\mathcal{S}_k^+}\leq \eta) = P^x(\sigma_{\mathcal{S}_k^+}\leq \eta \wedge \sigma_{\S_{\text{end}}})+ P^x( \sigma_{\S_{\text{end}}}\leq \sigma_{\mathcal{S}_k^+}\leq \eta)
\end{equation}
and that 
\begin{equation}\label{simplify}
\begin{aligned}
P^x(\sigma_{\mathcal{S}_k^+}\leq \eta \wedge \sigma_{\S_{\text{end}}})& \leq  P^x(\sigma_{\mathcal{S}_k^+}\leq \sigma_{\S_{\text{bot}}})
+P^x(\sigma_{\L_1}\leq \sigma_{\S_{\text{bot}}}\leq \sigma_{\mathcal{S}_k^+} \leq \eta\wedge \sigma_{\S_{\text{end}}})\\
& \leq  2P^x(\sigma_{\mathcal{S}_k^+}\leq \sigma_{\S_{\text{bot}}}),
\end{aligned}
\end{equation}
where we use the reflection principle at time $\sigma_{\L_1}$ and the fact that for every path starting in $\L_1$ for which $\sigma_{\S_{\text{bot}}}\leq \sigma_{\mathcal{S}_k^+}\leq \eta\wedge \sigma_{\S_{\text{end}}}$ there is a there is a reflection about $\L_1$ for which $\sigma_{\mathcal{S}_k^+}\leq \sigma_{\S_{\text{bot}}}$.

Note that if we define for $2\leq k\leq N-1$, 
$$I_{k, \text{bot}}^{++}=\{z\in\mathcal{S}_{\text{bot}}: z\in \cup_{i=k-2}^{k+2}\mathcal{S}_k\},$$
with the convention $\mathcal{S}_0=\mathcal{S}_{N+1}=\emptyset$, the Markov property at time $\sigma_{\S_{\text{bot}}}$ gives
\begin{equation*}\label{fromStoI}
\begin{aligned}
P^x(\sigma_{\mathcal{S}_k^+}\leq \sigma_{\S_{\text{bot}}})& \leq  P^x(\sigma_{\mathcal{S}_k^+}\leq \sigma_{\S_{\text{bot}}}, B(\sigma_{\S_{\text{bot}}})\in I_{k, \text{bot}}^{++})+  P^x(\sigma_{\mathcal{S}_k^+}\leq \sigma_{\S_{\text{bot}}}, B(\sigma_{\S_{\text{bot}}})\not\in I_{k, \text{bot}}^{++})\\
& \leq  P^x(B(\sigma_{\S_{\text{bot}}})\in I_{k, \text{bot}}^{++}) +P^x(\sigma_{\mathcal{S}_k^+}\leq \sigma_{\S_{\text{bot}}})\bigo{\frac{1}{\log n}},
\end{aligned}
\end{equation*}
so
\begin{equation}\label{fromStoI}
P^x(\sigma_{\mathcal{S}_k^+}\leq \sigma_{\S_{\text{bot}}})\lesssim P^x(B(\sigma_{\S_{\text{bot}}})\in I_{k, \text{bot}}^{++})\leq\frac{(k_0k)^{\ca-1}}{(k^{\ca}-k_0^{\ca})^2\log^{c_\alpha} n},
\end{equation}
with $c_{\alpha}=1/2$, where the second inequality follows from Lemma \ref{beurling1}.

Equations \eqref{simplify} and \eqref{fromStoI} now give a bound for the first term on the right of the equality in \eqref{firstdecomp}: With $c_{\alpha}=1/2$,
\begin{equation}\label{firstterm}
P^x(\sigma_{\mathcal{S}_k^+}\leq \eta \wedge \sigma_{\S_{\text{end}}})\lesssim \frac{(k_0k)^{\ca-1}}{(k^{\ca}-k_0^{\ca})^2\log^{c_\alpha} n},
\end{equation}

 For the second term in that equality, we have
\begin{equation}\label{043020}
P^x( \sigma_{\S_{\text{end}}}\leq \sigma_{\mathcal{S}_k^+}\leq \eta)\leq P^x( \sigma_{\S_{\text{end}}}\leq \sigma_{\mathcal{S}_k^+}\wedge \eta)\sup_{y\in\S_{\text{end}}}P^y(\sigma_{\mathcal{S}_k^+}\leq \eta).
\end{equation}

Note that for 
the last probability in \eqref{043020}, we can't use the argument of \eqref{simplify}. Instead we define, for $1\leq j\leq N$,
$$I_{j,\text{bot}}=\{z\in \S_{\text{bot}}:\re(z+z_0)\in [(j-1)\log^2 n,j\log^2 n)\}.$$
If we define 
\begin{equation}\label{P}
P=\sup_{y\in\S_{\text{end}}}P^y(\sigma_{\mathcal{S}_k^+}\leq \eta),
\end{equation}
then by symmetry, there is a point 
\begin{equation*}\label{y0}
y_0\in\S_{\text{end}}\cap\{z\in D_0:\im(z+z_0)\geq 0\}
\end{equation*}
such that $P=P^{y_0}(\sigma_{\mathcal{S}_k^+}\leq \eta)$. For such a point, there is $0<C<1$ such that 
\begin{equation}\label{travelabit}
P^{y_0}(B(\sigma_{\S_{\text{bot}}})\in I_{1,\text{bot}}, \re(z+z_0)\leq 2b\log n)=C.
\end{equation}

 Applying the strong Markov property at $\sigma_{I_{j,\text{bot}}}$ and $\sigma_{\S_{\text{end}}}$, we have
\begin{equation}\label{043020b}
\begin{aligned}
P & \leq P^{y_0}(\sigma_{\mathcal{S}_k^+}\leq \sigma_{\S_{\text{bot}}})+\sum_{j=1}^N P^{y_0}(B(\sigma_{\S_{\text{bot}}})\in I_{j,\text{bot}}, \sigma_{\S_{\text{bot}}}\leq \sigma_{\mathcal{S}_k^+}\leq \eta)\\
& \leq P^{y_0}(\sigma_{\mathcal{S}_k^+}\leq \sigma_{\S_{\text{bot}}})+\sum_{j=1}^NP^{y_0}(B(\sigma_{\S_{\text{bot}}})\in I_{j,\text{bot}})\sup_{y\in I_{j,\text{bot}}} P^y(\sigma_{\mathcal{S}_k^+}\leq \eta\wedge \sigma_{\S_{\text{end}}})\\
&\hspace{6pc}+\sum_{j=1}^N P^{y_0}(B(\sigma_{\S_{\text{bot}}})\in I_{j,\text{bot}})\sup_{y\in I_{j,\text{bot}}} P^y(\sigma_{\S_{\text{end}}}\leq \sigma_{\mathcal{S}_k^+}\wedge \eta)\cdot P
\end{aligned}
\end{equation}

Note that Lemma \ref{beurling1} implies that for $j\geq 2$,
\begin{equation}\label{01-29-1}
\sup_{y\in\S_{\text{end}}}P^y(B(\sigma_{\S_{\text{bot}}})\in I_{j,\text{bot}})\lesssim  \frac{1}{j^{3/2}\log n}
\end{equation}
and 
\begin{equation}\label{05-01-20}
\sup_{y\in I_{j,\text{bot}}} P^y(\sigma_{\mathcal{S}_k^+}\leq \eta\wedge \sigma_{\S_{\text{end}}}) \lesssim \frac{(jk)^{-1/2}}{(k^{1/2}-j^{1/2})^2\log^{1/2} n}.
\end{equation}
Using \eqref{travelabit}, \eqref{01-29-1}, and \eqref{05-01-20} in \eqref{043020b}, we see that
\begin{eqnarray*}
P &\lesssim & P^{y_0}(\sigma_{\mathcal{S}_k^+}\leq \sigma_{\S_{\text{bot}}})\\
&&\hspace{2pc} +\sup_{y\in I_{1,\text{bot}}} P^y(\sigma_{\mathcal{S}_k^+}\leq \eta\wedge \sigma_{\S_{\text{end}}})+\sum_{j=2}^N\frac{1}{j^{3/2}\log n}\frac{(jk)^{-1/2}}{(k^{1/2}-j^{1/2})^2\log^{1/2} n}\\
&&\hspace{4pc}+CP+\sup_{\substack{y\in I_{1,\text{bot}},\\\re(y+z_0)\geq 2b\log n}} P^y(\sigma_{\S_{\text{end}}}\leq \sigma_{\mathcal{S}_k^+}\wedge \eta)P\\
&&\hspace{6pc}+\sum_{j=2}^N\frac{1}{j^{3/2}\log n}\sup_{y\in I_{j,\text{bot}}} P^y(\sigma_{\S_{\text{end}}}\leq \sigma_{\mathcal{S}_k^+}\wedge \eta)\cdot P, 
\end{eqnarray*}
which implies, since $\sum_{j=2}^N\frac{1}{j^{3/2}\log n}=o(1)$ and $\displaystyle{\sup_{\substack{y\in I_{1,\text{bot}},\\\re(y+z_0)\geq 2b\log n}} P^y(\sigma_{\S_{\text{end}}}\leq \sigma_{\mathcal{S}_k^+}\wedge \eta)}<1$, that 
\begin{eqnarray*}
P &\lesssim & P^{y_0}(\sigma_{\mathcal{S}_k^+}\leq \sigma_{\S_{\text{bot}}})+\sup_{y\in I_{1,\text{bot}}} P^y(\sigma_{\mathcal{S}_k^+}\leq \eta\wedge \sigma_{\S_{\text{end}}})+\sum_{j=2}^N\frac{1}{j^{3/2}\log n}\frac{(jk)^{-1/2}}{(k^{1/2}-j^{1/2})^2\log^{1/2} n}\\
\end{eqnarray*}
Using the same argument as in \eqref{fromStoI} and the fact that $\sum_{j=2}^N\frac{1}{j^{2}}\frac{1}{(k^{1/2}-j^{1/2})^2}\lesssim k^{-1}$ (which can be shown, for instance, with help of the ideas in the second half of the proof of Proposition \ref{expdiff} below), we get 
\begin{equation}\label{Pbound}
P \lesssim  \frac{k^{-3/2}}{\log^{1/2} n}+\frac{1}{\log^{3/2} n}\sum_{j=2}^N\frac{1}{j^{2}}\frac{1}{(k^{1/2}-j^{1/2})^2}\lesssim  \frac{k^{-3/2}}{\log^{1/2} n}.
\end{equation}
Some of the ideas used in the argument leading to the bound in \eqref{Pbound} also imply
\begin{equation}\label{050320}
P^x( \sigma_{\S_{\text{end}}}\leq \sigma_{\mathcal{S}_k^+}\wedge \eta)\lesssim  \frac{k_0^{-3/2}}{\log^{1/2} n}.
\end{equation}
It now follows from \eqref{043020}, \eqref{P}, \eqref{Pbound}, and \eqref{050320} that
\begin{equation}\label{final}
P^x( \sigma_{\S_{\text{end}}}\leq \sigma_{\mathcal{S}_k^+}\leq \eta)\lesssim \frac{(k_0k)^{-3/2}}{\log  n}. 
\end{equation}
Since this bound is smaller than that in \eqref{firstterm}, combining \eqref{firstdecomp}, \eqref{firstterm}, and \eqref{final} yields the lemma when $\alpha=0$ and $2\leq k\leq N-1$. A similar argument can be used to handle the cases $k=1$ and $k=N$, which completes the proof in the case $\alpha = 0$ and thus of the lemma.
\end{proof}

\subsection{Obtaining the Rate of Convergence for the Green's Function}

The following consequence of the lemmas of the previous section is a key element of the proof of Theorem \ref{green}:

\begin{prop}\label{expdiff}
Suppose $x\in D_{\alpha}\cap\Z^2,y\in D_{\alpha}$ are such that $d(x,\bd \Da)\leq 4c_0\log n$, $d(x,y)\leq 3c_0\log n$, and a subset of $\bd \Da$ closest to $x$ is $I_{k_0}$. Then 
 
 $$E^{x,y}\left[\bigg|\log \frac{|S_{T_{D_{\alpha}}}|}{|B_{\tau_{D_{\alpha}}}|}\bigg|\right]\lesssim k_0^{c_\alpha-1}n^{-c_{\alpha}}\log^{c_{\alpha}+1} n,$$
where 
$$c_{\alpha}=\frac{1}{2}+\frac{\alpha}{4\pi-2\alpha}.$$
\end{prop}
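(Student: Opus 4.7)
The plan is to use the triangle inequality through the starting points $x$ and $y$ together with the Beurling-type estimates of Lemmas~\ref{beurling1} and~\ref{beurling2}; no coupling is required to bound the marginal expectations. Since the inradius of $D_\alpha$ is at least $n-1$ and its outer radius at most $3n$, both $|S_{T_{D_\alpha}}|$ and $|B_{\tau_{D_\alpha}}|$ lie in $[n-1,3n]$, so the elementary inequality
$$\left|\log\frac{|S_{T_{D_\alpha}}|}{|B_{\tau_{D_\alpha}}|}\right|\lesssim\frac{|S_{T_{D_\alpha}}-B_{\tau_{D_\alpha}}|}{n}\le\frac{|S_T-x|+|x-y|+|y-B_\tau|}{n}$$
reduces the problem to estimating $E^x[|S_T-x|]$ and $E^y[|y-B_\tau|]$ separately, since the middle term $|x-y|\le 3c_0\log n$ contributes $\bigo{\log n/n}$, well below the target.

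For $E^x[|S_T-x|]$, decompose by the exit arc:
$$E^x[|S_T-x|]\le\sum_{k=1}^N|I_k-x|\,P^x(S_T\in I_k),$$
with $|I_k-x|\lesssim|k-k_0|\log^2 n+\log n$ for arcs $I_k$ lying on the same piece of $\bd D_\alpha$ as the arc $I_{k_0}$ nearest $x$. For arcs on a different piece --- where the Euclidean distance can be as large as $(k+k_0)\log^2 n$ --- one argues separately: such arcs map under the conformal image $f_\alpha(D_\alpha)=\D^+$ to the far portion of $\partial\D^+$, and the corresponding exit probability (via Lemma~\ref{beurling2}) is suppressed enough to absorb the larger distance.

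Applying Lemma~\ref{beurling2} for $|k-k_0|\ge 2$ (and the trivial bound $P\le 1$ otherwise), evaluate the sum $\sum_k|k-k_0|P^x(S_T\in I_k)$ in three regions: $|k-k_0|\in[2,k_0/2]$, where the Taylor expansion $k^{c_\alpha}-k_0^{c_\alpha}\sim c_\alpha k_0^{c_\alpha-1}(k-k_0)$ contributes $\sim\log k_0/\log^{c_\alpha}n$; $k\ge 2k_0$, where $k^{c_\alpha}-k_0^{c_\alpha}\sim k^{c_\alpha}$ and the summand $\sim k^{-c_\alpha}/\log^{c_\alpha}n$ summed up to $N$ gives $\sim N^{1-c_\alpha}/\log^{c_\alpha}n$; and $k\le k_0/2$, which is bounded. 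Multiplying by $\log^2 n$ and using $N^{1-c_\alpha}\log^{2-c_\alpha}n\sim n^{1-c_\alpha}\log^{c_\alpha}n$ yields
$$E^x[|S_T-x|]\lesssim \log^{3-c_\alpha}n+k_0^{c_\alpha-1}n^{1-c_\alpha}\log^{c_\alpha}n.$$
Dividing by $n$ and using $k_0\le N$ (so $\log^{2-2c_\alpha}n\lesssim k_0^{c_\alpha-1}n^{1-c_\alpha}$), both terms fit inside the target $k_0^{c_\alpha-1}n^{-c_\alpha}\log^{c_\alpha+1}n$. The same argument applied to $E^y[|y-B_\tau|]$ via Lemma~\ref{beurling1} (with $k_0$ replaced by the nearest-arc index to $y$, which differs from $k_0$ by $\bigo{1}$) completes the proof.

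The main obstacle will be handling the opposite-side arcs: the crude bound $|I_k-x|\le(k+k_0)\log^2 n$ used uniformly in $k$ leaves a residual term $k_0\log^2 n/n$ that is $\sim 1$ when $k_0\sim N$ and would overwhelm the target. The resolution is to split the sum over $k$ by the piece of $\bd D_\alpha$ on which $I_k$ lies: under $f_\alpha$, the opposite piece is sent to the far portion of $\partial\D^+$, where the Cauchy-type harmonic measure from $f_\alpha(x)$ is small enough to compensate. A secondary obstacle is tracking the sharp $k_0^{c_\alpha-1}$ prefactor through the three-region sum, which reflects the Cauchy-type tail (encoded in the factor $(k^{c_\alpha}-k_0^{c_\alpha})^{-2}$ of Lemmas~\ref{beurling1} and~\ref{beurling2}) of the Brownian exit distribution in $\D^+$ near the singular tip of $D_\alpha$.
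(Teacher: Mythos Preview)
Your approach and the paper's share the same core: decompose $\partial D_\alpha$ into the arcs $I_k$, apply Lemmas~\ref{beurling1} and~\ref{beurling2}, and estimate the weighted sum $\sum_k |k-k_0|\,P(\text{exit in }I_k)$ by splitting on the size of $|k-k_0|$. The paper uses four ranges ($k\le k_0/2$, $k_0/2\le k<k_0$, $k_0<k\le 3k_0/2$, $k\ge 3k_0/2$), which is your three-region split with the middle region divided by the sign of $k-k_0$.

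The substantive difference is in the initial reduction, and here the paper's route bypasses your main self-imposed obstacle. You pass through $|S_T-B_\tau|\le |S_T-x|+|x-y|+|y-B_\tau|$, which forces you to control the Euclidean displacement $|S_T-x|$; this quantity is sensitive to \emph{which} of the two straight boundary pieces $S_T$ lands on, creating the opposite-side problem. The paper instead bounds $\big|\log(|S_T|/|B_\tau|)\big|$ directly by $(|k-\ell|+1)\log^2 n/n$ when $S_T\in I_k,\ B_\tau\in I_\ell$, and then uses $|k-\ell|\le|k-k_0|+|\ell-k_0|$ to decouple into two single sums. The point is that the origin lies on the axis of symmetry of $D_\alpha$, so $|z|$ is identical for points on either straight piece at the same distance from $-z_0$; hence $\big||z|-|w|\big|\lesssim(|k-\ell|+1)\log^2 n$ regardless of which side $z$ and $w$ lie on, and no opposite-side issue ever arises. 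Your proposed fix is plausible in outline but would need a sharpening of Lemmas~\ref{beurling1}--\ref{beurling2} that separates the two straight pieces (as stated, those lemmas bound only the total mass of $I_k$, not the mass of each side); note also that for small $k$ the opposite piece does \emph{not} map to the ``far portion'' of $\partial\D^+$ but rather near the origin, so the suppression mechanism you sketch needs more care. The paper's symmetry observation gives you all of this for free.
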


\begin{proof} If for $1\leq k, \ell \leq N, z\in I_k, w\in I_\ell$, then 
$$|\log(|z|/|w|)|\leq \left|\log\left(\frac{|w|+|z-w|}{|w|}\right)\right|=\bigo{\frac{(|k-\ell|+1)\log^2 n}{n}},$$
so
\begin{eqnarray}\label{last1}\notag
E^{x,y}\left[\bigg|\log \frac{|S_{T_{D_{\alpha}}}|}{|B_{\tau_{D_{\alpha}}}|}\bigg|\right] 
& \lesssim & \frac{\log^2 n}{n}\sum_{k,\ell=1}^N P^x(S_{T_{D_{\alpha}}}\in I_k)P^y(B_{\tau_{D_{\alpha}}}\in I_\ell)(|k-k_0|+|\ell-k_0|+1)\\ \notag
& \lesssim & \frac{\log^2 n}{n}\left(1+\sum_{k=1}^N P^x(S_{T_{D_{\alpha}}}\in I_k)|k-k_0|+\sum_{\ell=1}^N P^y(B_{\tau_{D_{\alpha}}}\in I_\ell)|\ell-k_0|\right)\\
&\leq &\frac{\log^2 n}{n}+\sum_{k=1}^{N}\frac{(k_0 k)^{c_\alpha-1}}{(k_0^{c_\alpha}-k^{c_\alpha})^2\log^{c_\alpha} n}\frac{|k_0-k|\log^2 n}{n},
\end{eqnarray}
where the last inequality is a consequence of Lemmas \ref{beurling1} and  \ref{beurling2}.
If $N\geq \lceil 3k_0/2\rceil$, then
\begin{equation}\label{last2}
\sum_{\substack{k =1\\ k\neq k_0}}^N\frac{k^{c_\alpha - 1}|k_0-k|}{(k_0^{c_\alpha}-k^{c_\alpha})^2}=S_1+S_2+S_3+S_4,
\end{equation}
where
$$S_1=\sum_{k\leq \lfloor k_0/2\rfloor}\frac{k^{c_\alpha - 1}(k_0-k)}{(k_0^{c_\alpha}-k^{c_\alpha})^2}, \qquad S_2=\sum_{k=\lceil k_0/2\rceil}^{k_0-1}\frac{k^{c_\alpha - 1}(k_0-k)}{(k_0^{c_\alpha}-k^{c_\alpha})^2},$$
$$S_3=\sum_{k=k_0+1}^{\lfloor 3k_0/2\rfloor}\frac{k^{c_\alpha - 1}(k-k_0)}{(k^{c_\alpha}-k_0^{c_\alpha})^2}, \qquad S_4=\sum_{k=\lceil 3k_0/2\rceil}^{N}\frac{k^{c_\alpha - 1}(k-k_0)}{(k^{c_\alpha}-k_0^{c_\alpha})^2}.$$
Clearly, $S_1\lesssim k_0^{1-c_{\alpha}}$. It is easy to see that for $\lceil k_0/2\rceil \leq k \leq k_0-1$, 
$$\frac{k_0-k}{(k_0^{c_\alpha}-k^{c_\alpha})^2}\lesssim \frac{k_0^{2-2c_\alpha}}{k_0-k},$$
so 
$$S_2\lesssim \sum_{j=1}^{\lfloor k_0/2\rfloor}\frac{(k_0-j)^{c_{\alpha}-1}k_0^{2-2c_\alpha}}{j}\lesssim k_0^{1-c_\alpha}\log (k_0).$$
Similarly, $S_3\lesssim k_0^{1-c_\alpha}\log (k_0)$. Finally, using the fact that if $k\geq \lceil 3k_0/2\rceil$, there is $C>0$ such that $(1-(k_0/k)^{c_\alpha})\geq C$, we see that 
$$S_4\lesssim \sum_{k=\lceil 3k_0/2\rceil}^{N}k^{-c_\alpha}\lesssim N^{1-c_\alpha}-(3k_0/2)^{1-c_\alpha}.$$
Combining the bounds for $S_1, S_2, S_3, S_4$ with \eqref{last2} and \eqref{last1}, we get, in the case $N\geq \lceil 3k_0/2\rceil$,
$$\sum_{\substack{k =1\\ k\neq k_0}}^N\frac{k^{c_\alpha - 1}|k_0-k|}{(k_0^{c_\alpha}-k^{c_\alpha})^2}\lesssim N^{1-c_\alpha} \log N,$$
so 
$$E^{x,y}\left[\bigg|\log \frac{|S_{T_{D_{\alpha}}}|}{|B_{\tau_{D_{\alpha}}}|}\bigg|\right]\lesssim \frac{k_0^{c_\alpha -1}\log^{2-c_\alpha} n}{n}N^{1-c_\alpha} \log N\lesssim \frac{k_0^{c_\alpha -1}\log{2-c_\alpha} n}{n}\left(\frac{n}{\log^2 n}\right)^{1-c_\alpha} \log n,
$$
which proves the proposition in the case where $N\geq \lceil 3k_0/2\rceil$. If $N< \lceil 3k_0/2\rceil$, $S_4=0$ and the bounds for $S_1, S_2, S_3$ are the same.
\end{proof}





The strategy we use in the proof of Theorem \ref{green} is similar to that in \cite{kl} and \cite{bjk}, though we handle some technical issues slightly differently in the present paper. We couple $B$ and $S$ until they are close to $\bd\Da$ but are likely not to have left $\Da$ yet. We then let each run independently and use Proposition \ref{expdiff}. The technical difficulty stems in the fact that we would like to use the strong Markov property but $B$ and $S$ are not strong Markov when considered jointly under the coupling.

\begin{proof} [Proof of Theorem \ref{green}]

In light of \eqref{discretegreenrep}, \eqref{ax}, and \eqref{contgreenrep} above,  since there exist $c_1, c_2>0$ such that 
\begin{equation}\label{inrad}
c_1n\leq \inf_{z \in D_{\alpha}}|z|\leq \sup_{z \in D_{\alpha}}|z|\leq c_2n,
\end{equation}
we have
\begin{equation}\label{mainproof1}
\left|G_{D_{\alpha}}(w)-\frac{2}{\pi}g_{D_{\alpha}}(w)\right|=E^w\left[\bigg|\log \frac{|S_{T_{D_{\alpha}}}|}{|B_{\tau_{D_{\alpha}}}|}\bigg|\right]+\bigo{|w|^{-2}}.
\end{equation}
In order to prove the theorem, we just need to show that the upper bound of Proposition \ref{expdiff} with $k_0^{c_{\alpha}-1}$ replaced by the obvious upper bound of 1 also holds for the expected value in \eqref{mainproof1}.  We let $\mathcal{K}$ be as in \eqref{k}. Note that \eqref{inrad} implies that 
$$\bigg|\log \frac{|S_{T_{D_{\alpha}}}|}{|B_{\tau_{D_{\alpha}}}|}\bigg|\asymp 1,$$
so
$$E^w\left[\bigg|\log \frac{|S_{T_{D_{\alpha}}}|}{|B_{\tau_{D_{\alpha}}}|}\bigg|\right] \leq E^w\left[\bigg|\log \frac{|S_{T_{D_{\alpha}}}|}{|B_{\tau_{D_{\alpha}}}|}\bigg|; \K\right]+ \bigo{n^{-10}}$$

For the rest of the proof, we let $c_0$ be as in \eqref{k} 
and define 
$$\eta= \inf\{t\geq 0: d(B_t,\partial \Da)\leq 2c_0\log n\}$$
(see Figure \ref{Fig1}) and note that on the event $\K$, $\eta\leq \tau_{D_\alpha}\wedge T_{D_\alpha}$.
Then 
\begin{eqnarray*}
E^w\left[\bigg|\log \frac{|S_{T_{D_{\alpha}}}|}{|B_{\tau_{D_{\alpha}}}|}\bigg|; \K\right] &=& \sum_{k\geq 0} E^w\left[\bigg|\log \frac{|S_{T_{D_{\alpha}}}|}{|B_{\tau_{D_{\alpha}}}|}\bigg|; \K; \eta\in [k,k+1)\right]\\
& \leq & \sum_{k\geq 0} E^w\left[\bigg|\log \frac{|S_{T_{D_{\alpha}}}|}{|B_{\tau_{D_{\alpha}}}|}\bigg|; \mathcal{E}\right],
\end{eqnarray*}
where $\mathcal{E}=\{\max\{d(S_k, \bd \Da), d(B_k, \bd \Da)\}\leq 4c_0\log n; d(B_k,S_k)\leq 3c_0\log n; k\leq \tau_{D_\alpha}\wedge T_{D_\alpha}\}$ and the last inequality follows from Lemma \ref{toofar}. The proof of the theorem is now complete if we apply the Markov property at time $k$ and use Proposition \ref{expdiff}.


\end{proof}

\section{Acknowledgments}
The author gratefully acknowledges support through PSC-CUNY Award \# 61514-00 49.

\bibliographystyle{plain}
\bibliography{harmonic}

\end{document}